\newcommand{\orcidicon}[1]{%
  \href{https://orcid.org/#1}{%
    \raisebox{-0.2ex}{\includegraphics[height=1.6ex]{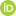}}%
  }%
}
\begin{document}
\mainmatter              
\title{Detecting Zariski Pairs by Algorithms and Computational Classification
in Conic--Line Arrangements}
\titlerunning{Detecting Zariski Pairs in Conic--Line Arrangements}  
%
\author{%
Meirav Amram\inst{1}\,\orcidicon{0000-0003-4912-4672}
\and
Gal Goren\inst{2}\,\orcidicon{0009-0007-8033-3663}%
}
\authorrunning{Amram et al.} 
%
\tocauthor{Meirav Amram, Gal Goren}
\institute{Department of Mathematics, Shamoon College of Engineering, Ashdod, Israel\\
\and
Faculty of Mathematics, Technion - Israel Institute of Technology, Haifa, Israel
\email{meiravt@sce.ac.il, lag@campus.technion.ac.il (corresponding author)}
}

\maketitle              

\begin{abstract}
We present an approach to detecting Zariski pairs in conic–line arrangements. Our method introduces a combinatorial condition that reformulates the tubular-neighborhood homeomorphism criterion arising in the definition of Zariski pairs. This allows for a classification of arrangements into combinatorial equivalence classes, which we generate systematically via an inductive algorithm. For each class, potential Zariski pairs are examined using structural lemmas, projective equivalence, and fundamental group computations obtained through the Zariski--van Kampen Theorem.
\keywords{Zariski pairs, conic--line arrangements, combinatorial equivalence, fundamental groups}
\end{abstract}
%
\section{Introduction}
The study of Zariski pairs originated with Zariski \cite{zariski29} and has led many researchers to investigate this phenomenon.

Before we present the objectives of our study and the results, we provide the definition of the Zariski pair as presented in \cite{Bartolo_tokunaga_survey}.
\begin{definition}\label{def:zariski_pairs}
A pair of reduced plane curves $ \mathcal{B}_1,\mathcal{B}_2 \subseteq \mathbb{CP}^2 $ is called a \emph{Zariski pair} if:
\begin{enumerate}
	\item There exist tubular neighborhoods $ T(\mathcal{B}_1) $ and  $ T(\mathcal{B}_2) $ and a homeomorphism
	$ h: T(\mathcal{B}_1)\to T(\mathcal{B}_2) $ with $ h(\mathcal{B}_1)=\mathcal{B}_2 $.
	
	\item There is no homeomorphism $ f:\mathbb{CP}^2\to \mathbb{CP}^2 $ with $ f(\mathcal{B}_1)=\mathcal{B}_2 $.
\end{enumerate}
\end{definition}

We denote the degree of a reduced plane curve as the sum of the degrees of its irreducible components.
In this paper, when we say that a pair of reduced plane curves $(\mathcal{B}_1,\mathcal{B}_2)$ is homeomorphic, we mean there is a homeomorphism $f : \mathbb{CP}^2 \rightarrow \mathbb{CP}^2$ such that $f(\mathcal{B}_1) = \mathcal{B}_2$.
The main objective of this paper is to present an approach to detecting Zariski pairs within conic--line arrangements.
We approach this problem by first introducing a combinatorial condition, which serves as a combinatorial reformulation of the tubular-neighborhood homeomorphism condition appearing in the definition of Zariski pairs. This enables us to divide all possible conic–line arrangements into equivalence classes of combinatorial type. Using an inductive program, we systematically generate all such classes for the configurations considered here. For each resulting combinatorial type, we then investigate the existence of Zariski pairs by applying a combination of structural lemmas, projective equivalence arguments, and fundamental group computations. By the Zariski--van Kampen Theorem \cite{vanKampen33} we can derive presentations of those groups by means of generators and relations.
This technique is very strong in the sense that many of the known examples of Zariski pairs have distinct fundamental groups of their complements.

Before continuing, we provide an overview of existing results and current studies in this domain.
Zariski presented the first Zariski pair as two sextics with 6 cusps each, where the cusps lie on a conic in the first curve, but not in the second curve \cite{zariski29}.
 Since then, finding Zariski pairs has been developed, especially by Artal-Bartolo, Cogolludo-Agustin, and Tokunaga in \cite{Bartolo_tokunaga_survey}, and by Artal-Bartolo in \cite{Bartolo94}. Artal-Bartolo pointed out that two curves with isomorphic combinatorics and different topologies are a Zariski pair.
 Shimada extended Artal-Bartolo's method of Kummer coverings from \cite{Bartolo94}, to construct two infinite series of Zariski pairs of increasing degrees in \cite{shimada}.

In \cite{DEG_isotopy}, Degtyarev proves that the rigid isotopy class of curves of degree at most 5
is determined by the combinatorial data. In particular, his work implies that, up to
degree 5, there are no Zariski pairs. He also classifies in \cite{DEG} the isotopy classes of the
curves of degree 6 with simple singularities.

In \cite[Appendix A]{shustin_appendix}, Shustin used patchworking construction to construct plane curves of degree $ \nu (\nu-1) $ (for $ 3\le \nu \le 10 $), which cannot appear as branch loci of degree $ \nu $ surfaces, yielding examples of Zariski pairs of degrees $ d\in \{ 6, 12, 20, 30, 42, 56, 72, 90 \} $. 
Bannai and Tokunaga \cite{bannai_tokunaga2019zariski}, Artal-Bartolo, Bannai, Shirane, and Tokunaga \cite{bartolo_tokunaga2020torsion,bartolo_tokunaga2020torsion2}, and Takahashi and Tokunaga \cite{takahashi_tokunaga2020explicit} used the arithmetic of elliptic curves to construct examples of Zariski pairs and Zariski tuples. 

Oka directly compared the fundamental groups $ \pi_1(\mathbb{CP}^2 - \mathcal{B}_1, *)$ and $ \pi_1(\mathbb{CP}^2 - \mathcal{B}_2, *)$ of the complements of $\mathcal{B}_1$ and $\mathcal{B}_2$ in \cite{Oka1999FlexCA,Oka2002}, which is also Zariski's original point of view on the problem. 

Artal-Bartolo and Dimca \cite{Dimca} studied the fundamental groups of plane curves, describing topological properties of curves with an abelian fundamental group. Artal-Bartolo, Carmona, and Cogolludo-Agustin \cite{Artal2} showed that the braid monodromy of an affine plane curve determines the topology of a related projective plane curve, and that the Zariski–-van Kampen Theorem can be used to compute its fundamental group. Artal-Bartolo, Carmona, Cogolludo-Agustin, Luengo, and Melle \cite{Artal3} further reviewed the Zariski-–van Kampen method as a key tool for obtaining presentations of fundamental groups of curve complements in $\mathbb{P}^2$. Namba and Tsuchihashi, in \cite{Namba2004OnTF},  used the fundamental group of the complement to construct an example of a Zariski pair of a degree 8 conic--line arrangement; Tokunaga used the theory of dihedral covers to construct a  few such examples of degree 7 in \cite{tokunaga2014}. 

\section{Conventions, examples and motivation}

In this work, we focus on algebraic curves that are conic--line arrangements. This framework naturally generalizes the study of line arrangements and allows existing techniques to be extended to a broader class of configurations.

\subsection{Conventions}
\begin{definition}
\leavevmode
\begin{enumerate}
    \item An $(n,m)$--\emph{arrangement} is a reduced conic--line arrangement consisting of $n$ lines and $m$ conics in $\mathbb{CP}^2$.
    \item A \emph{constraint} is a point of intersection of at least three components (lines or conics); such points play a central role in our combinatorial classification.
    \item A Zariski pair $(\mathcal{B}_1,\mathcal{B}_2)$ is \emph{minimal} if no proper sub-arrangements $\mathcal{B}_1',\mathcal{B}_2'$ form a Zariski pair.
\end{enumerate}
\end{definition}
\begin{example}
  Following the conventions above, we present an example of a $(4,1)$--arrangement. See Fig.~\ref{41example}.
  \end{example}
    \begin{figure}[ht]

\begin{center}
\label{41example}
\includegraphics[width=\linewidth]{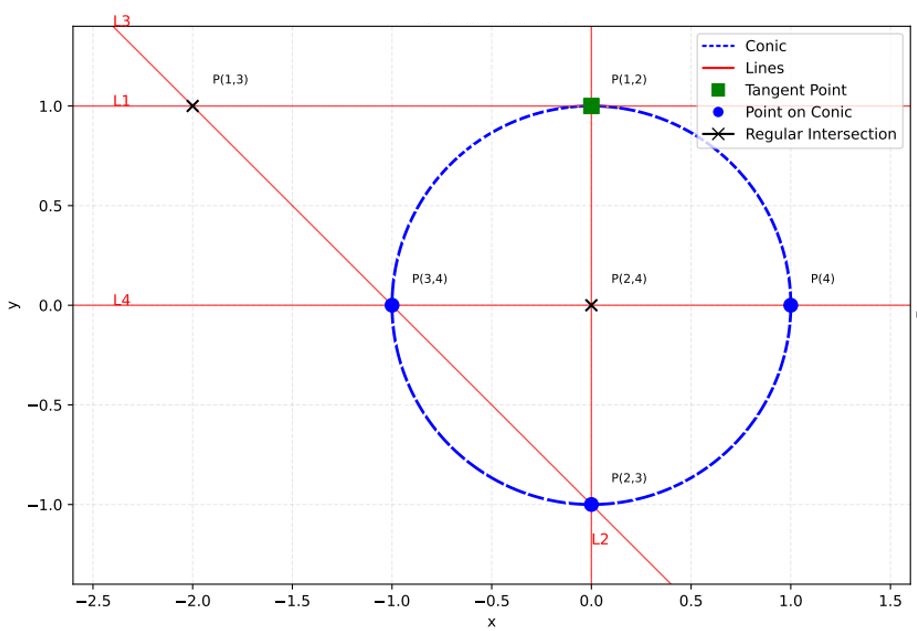}
\end{center}
\caption{Example of a $(4,1)$--arrangement}
\end{figure}
\subsection{Known examples}

Most known Zariski pairs consist of irreducible curves. But we can try to find Zariski pairs among curves at the other extreme - in the unions of curves of low degree. 

A meaningful study has been done on conic--line arrangements, as we show in the following examples.

\begin{example}
    Tokunaga's arrangements are a very good example for Zariski pairs. Tokunaga proved in \cite{tokunaga2014} that the two pairs of conic--line arrangements in Figs.~\ref{fig_tokunaga1} and~\ref{fig_tokunaga2} are Zariski pairs. Then, in \cite{arxiv_version}, this result was proven via fundamental groups as well.
    
    The two upper conic--line arrangements are $(3,2)$--arrangements. One arrangement can be obtained from the other by continuously rotating one of the lines. The types of singularities are invariant under this process. 

    The two bottom $(1,3)$-arrangements also form a Zariski pair. Each curve consists of three conics and one line, and again one can be obtained from the other by a continuous rotation of the line.


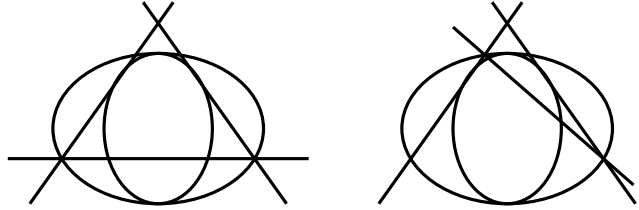
\begin{figure}[ht]
\centering
\begin{tikzpicture} [scale=0.2, style = very thick]
	\begin{scope}[xshift=-330]
		\draw (0,0) ellipse (7 and 5);
		\draw (8,0) node {\footnotesize };
		\draw (0,0) ellipse (3.6 and 5);
		\draw (3,0) node {\footnotesize };
		\draw (-10, -2) -- (10, -2);
		\draw (11, -2) node {\footnotesize };
		\draw (-1,8.406) -- (8.533, -5);
		\draw (8.6, -6) node {\footnotesize };
		\draw (1,8.406) -- (-8.533, -5);
		\draw (-8.6, -6) node {\footnotesize };
		
		\draw (0, -4) node {\footnotesize };
		\draw (0, 4) node {\footnotesize };
		
		\draw (7.3, -2.5) node {\footnotesize };
		\draw (-7.4, -2.5) node {\footnotesize };
		\draw (2.2, 5.5) node {\footnotesize };
		\draw (-2.3, 5.5) node {\footnotesize };
		
		\draw (0, -7) node {};
	\end{scope}
	
	\begin{scope}[xshift=330]
		\draw (0,0) ellipse (7 and 5);
		\draw (8,0) node {\footnotesize };
		\draw (0,0) ellipse (3.6 and 5);
		\draw (3,-0.5) node {\footnotesize };
		\draw (-1,8.406) -- (8.533, -5);
		\draw (8.6, -6) node {\footnotesize };
		\draw (1,8.406) -- (-8.533, -5);
		\draw (-8.6, -6) node {\footnotesize };
		\draw (8.4, -3.75) -- (-3.6, 6.75);
		\draw (9, -4) node {\footnotesize };
		
		\draw (0, -4) node {\footnotesize };
		\draw (0, 5.7) node {\footnotesize };
		
		\draw (7.4, -2.2) node {\footnotesize };
		\draw (-7.4, -2.5) node {\footnotesize };
		\draw (2.2, 5.5) node {\footnotesize };
		\draw (-1.9, 5.7) node {\footnotesize };
		
		\draw (0, -7) node {};
	\end{scope}
	
\end{tikzpicture}
\caption{A Zariski pair with two conics and 3 lines}\label{fig_tokunaga1}
\end{figure}

\begin{figure}[ht]
\centering
\begin{tikzpicture} [scale=0.2, style = very thick]
	\begin{scope}[xshift=-330]
		\draw (0,0) ellipse (7 and 4);
		\draw (-7, -3) node {\footnotesize };
		
		\draw (0,0) ellipse (4 and 7);
		\draw (-3, 7) node {\footnotesize };
		
		\draw (0,0) ellipse (4 and 4);
		\draw (-2, 2) node {\footnotesize };
		
		\draw (3.47, -7) -- (3.47, 7); 
		\draw (4, -6) node {\footnotesize };
		
		\draw (-4.7, 0) node {\footnotesize };
		\draw (4.7, 0) node {\footnotesize };
		\draw (0, 4.7) node {\footnotesize };
		\draw (0, -4.7) node {\footnotesize };
		
		\draw (4.4, 3.7) node {\footnotesize };
		\draw (-4.4, 3.7) node {\footnotesize };
		\draw (4.4, -3.7) node {\footnotesize };
		\draw (-4.4, -3.7) node {\footnotesize };
		\draw (0, -8) node {};
	\end{scope}
	
	\begin{scope}[xshift=330]
		\draw (0,0) ellipse (7 and 4);
		\draw (-7, -3) node {\footnotesize };
		
		\draw (0,0) ellipse (4 and 7);
		\draw (-3, 7) node {\footnotesize };
		
		\draw (0,0) ellipse (4 and 4);
		\draw (-2, 2) node {\footnotesize };
		
		\draw (-7, -7) -- (7, 7); 
		\draw (-4, -6) node {\footnotesize };
		
		\draw (-4.7, 0) node {\footnotesize };
		\draw (4.7, 0) node {\footnotesize };
		\draw (0, 4.7) node {\footnotesize };
		\draw (0, -4.7) node {\footnotesize };
		
		\draw (4.4, 3.7) node {\footnotesize };
		\draw (-4.4, 3.7) node {\footnotesize };
		\draw (4.4, -3.7) node {\footnotesize };
		\draw (-4.4, -3.7) node {\footnotesize };
		
		\draw (0, -8) node {};
	\end{scope}
\end{tikzpicture}

\caption{A Zariski pair with three conics and one line}
\label{fig_tokunaga2}
\end{figure}
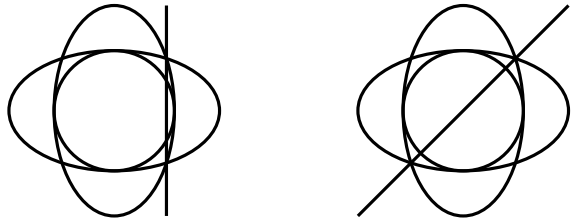
\end{example}

There is also a study on conic arrangements that we think is worthwhile to give here as a motivation to study conic--line arrangements.

\begin{example}\label{namba}
Namba and Tsuchihashi used in \cite{Namba2004OnTF} the fundamental group of the complement to construct an example of a Zariski pair of degree 8 conic arrangements, see Fig.~\ref{fig:Namba}.

 \begin{figure}[ht]
     \centering
    \includegraphics[width=0.7\linewidth]{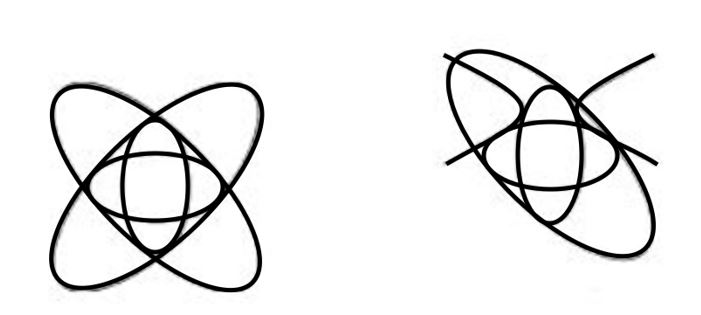}
     \caption{A Zariski pair of conic arrangements of degree 8}
       \label{fig:Namba}
 \end{figure}
 
\end{example}
Results like these, as we see in Example \ref{namba}, can be very helpful when searching for $(n,m)$--arrangements with $m \geq$ 4.

A joint work of the first author with Shwartz, Sinichkin, Tan, and Tokunaga gives a new Zariski pair of conic--line arrangements, as explained in the following example.

\begin{example}
A new Zariski pair of conic--line arrangements with three conics and two lines was found in \cite{arxiv_version}. 
Each curve is a $(2,3)$--arrangement. They both consist of Tokunaga's $(1,3)$--arrangements, see Fig.~\ref{degree8pair}.
\begin{figure}[ht]
\centering
\includegraphics[width=0.8\linewidth]{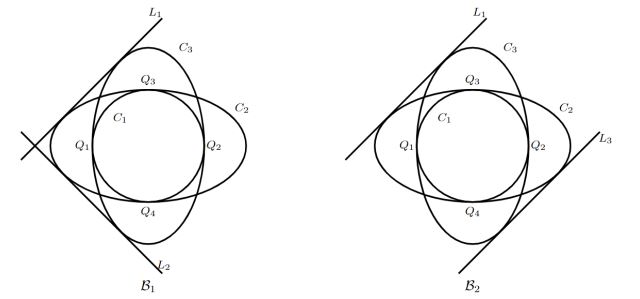}
\caption{A new Zariski pair of $(2,3)$--arrangements}\label{degree8pair}
\end{figure}
\end{example}

\FloatBarrier
\subsection{Motivation}

Our aim is to discover a significant number of new examples, or at least to lay the foundation for a method that can ultimately lead to a complete classification of $(n,m)$--arrangements. 

In the realm of $(n,1)$--arrangements, relatively little is known. Only recently has the first example of a Zariski pair of this type been discovered. Motivated by the broader goal of achieving a complete classification of Zariski pairs in conic–line arrangements, we begin by examining the cases of $(n,1)$--arrangements. The first such example was identified by Bannai, Guerville-Ballé, and Shirane in \cite{ben2}, where they construct Zariski pairs arising from  $(7,1)$--arrangements, see Fig.~\ref{71example}.

\begin{figure}[ht] 
\centering
\includegraphics[width=\linewidth]{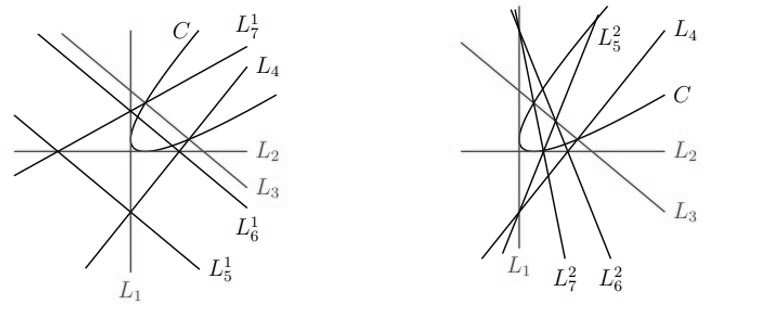}
\caption{$(7,1)$--arrangement discovered in \cite{ben2}.}
\label{71example}
\end{figure}

\FloatBarrier

Thus far, only isolated examples of conic--line Zariski pairs have been found, and no systematic approach exists for generating or classifying them. In the next section, we present our computational framework, which inductively generates $(n,1)$--arrangements and tests them for equivalence.

\section{Computations and results}
We introduce a notion of combinatorial equivalence that reflects the first condition in the definition of a Zariski pair. This combinatorial framework serves as the foundation for an inductive program that generates all $(n,1)$–arrangements up to equivalence, forming the pool from which potential Zariski pairs may later be identified.

\subsection{Combinatorial equivalence}

Let $C$ be an $(n,1)$--arrangement and let $I(C)$ be the set of all intersection points in $C$.
\begin{definition}
    For each $p \in I(C)$ we associate a \emph{characteristic triple}
\[
    \operatorname{char}(p) = (\chi_1,\chi_2,\chi_3),
\]
defined as follows:
\begin{enumerate}
    \item $\chi_1 = 1$ if $p$ is a tangency point, and $\chi_1 = 0$ otherwise.
    \item $\chi_2 = 1$ if $p$ lies on the conic, and $\chi_2 = 0$ otherwise.
    \item $\chi_3 =$ the number of distinct lines through $p$ which are not tangent at $p$.
\end{enumerate}

\end{definition}
\begin{theorem} \label{thrm32}
    For a pair $(C_1,C_2)$ of $(n,1)$--arrangements, the following are equivalent:
    \begin{enumerate}
    \item There exist tubular neighborhoods $T(C_1),T(C_2)$ and a homeomorphism 
    $h : T(C_1) \to T(C_2)$ with $h(C_1)=C_2$.
    \item There exist indexings of the lines of $C_1$ and $C_2$, respectively, and a bijection
    $b : I(C_1) \to I(C_2)$ such that for every $p \in I(C_1)$:
   \begin{itemize}
        \item $\operatorname{char}(b(p)) = \operatorname{char}(p)$
        \item the set of line indices through $p$ is the same as the set of line indices through $b(p)$,
              and if a line is tangent at $p$ then the line with the same index is tangent at $b(p)$.
    \end{itemize}
\end{enumerate}
\end{theorem}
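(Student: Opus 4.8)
The plan is to prove the two implications separately, using throughout the principle that a tubular (regular) neighborhood $T(C)$ of an $(n,1)$--arrangement $C$ is determined, up to homeomorphism rel $C$, by plumbing together disk bundles over the smooth irreducible components of $C$. Each component is a $2$--sphere ($\mathbb{CP}^1$): the conic contributes a sphere of normal Euler number (self-intersection) $4$ and each line a sphere of Euler number $1$, and the plumbing datum at each intersection point records the local embedded topological type of the branches meeting there. Regular neighborhood theory then guarantees that $T(C)$ is recovered up to homeomorphism from this datum, which is exactly what will let me pass back and forth between the topology and the combinatorics.

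\emph{$(2)\Rightarrow(1)$.} Given the indexings and the bijection $b$, I would first build a homeomorphism $C_1\to C_2$ and then thicken it. Any bijection between two finite sets of marked points on a $2$--sphere extends to a homeomorphism of the sphere, so the restriction of $b$ to the conic is realized by a homeomorphism $Q_1\to Q_2$, and for each index $i$ the restriction of $b$ is realized by a homeomorphism of the sphere $L_i\subset C_1$ onto the line of $C_2$ with the same index. Because the line-index condition forces the branches through $p$ and through $b(p)$ to carry the same labels (with tangency corresponding to tangency), these component-wise maps agree at every intersection point and glue to a homeomorphism $C_1\to C_2$. To thicken it to $h\colon T(C_1)\to T(C_2)$ with $h(C_1)=C_2$, I would splice in, at each $p$, a homeomorphism of a small $4$--ball carrying the local arrangement at $p$ to that at $b(p)$: such a map exists because the characteristic triple fixes the number of branches, which of them is the conic, and the pairwise contact orders (all equal to $1$ except a single order--$2$ contact at a tangency), and these determine the local embedded topological type, i.e.\ the link. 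Over the edges joining singular points the neighborhood is a trivial $D^2$--bundle over an arc, so the local maps extend by the Alexander trick; the normal Euler numbers match component by component ($1$ for lines, $4$ for the conic), so no framing obstruction arises and the pieces assemble into $h$.

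\emph{$(1)\Rightarrow(2)$.} Conversely, $h$ restricts to a homeomorphism $C_1\to C_2$. The intersection points are intrinsically the points at which $C$ fails to be a topological $2$--manifold, so $h$ carries $I(C_1)$ bijectively onto $I(C_2)$, defining $b$ and preserving the number of local branches; the distinction between a transverse node and a tangency is recorded topologically by the pairwise linking numbers of the branches in the link $S^3$ (the intersection multiplicities, $1$ versus $2$), so $\chi_1$ is preserved. The essential point is that $h$ must respect the decomposition into irreducible components: the self-intersection of an embedded $2$--sphere equals the Euler number of its normal bundle, a homeomorphism invariant of the neighborhood, so $h$ sends the conic (self-intersection $4$) to the conic and each line (self-intersection $1$) to a line. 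Hence $\chi_2$ is preserved (a point lies on the conic iff its image does) and so is $\chi_3$. Indexing the lines of $C_1$ arbitrarily and the lines of $C_2$ so that $h$ sends $L_i$ to $L_i$ realizes the required indexings, and the line-index and tangency conditions then hold by construction.

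\emph{Main obstacle.} I expect the crux to be the local analysis underlying both directions: proving that the characteristic triple determines the local embedded topological type at a singular point, so that the $4$--ball homeomorphisms of $(2)\Rightarrow(1)$ exist and so that $\chi_1,\chi_2,\chi_3$ are genuine topological invariants in $(1)\Rightarrow(2)$, and then gluing these local homeomorphisms into a global one with consistent normal framings. In particular, the component-decomposition step in $(1)\Rightarrow(2)$, which rests on the self-intersection being a homeomorphism invariant, is where the distinction between conic and line — and hence the values of $\chi_2$ and $\chi_3$ — is actually recovered, and it deserves the most care.
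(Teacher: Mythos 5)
The paper states Theorem~\ref{thrm32} without any proof, so there is no argument of record to measure yours against; what follows is an assessment of your proposal on its own terms. Your route is the standard one: condition (2) is exactly the ``same combinatorics'' data in the sense of the Artal-Bartolo--Cogolludo--Tokunaga survey the paper cites (components with degrees, singular points with local embedded types, and the branch-to-component assignment), and the equivalence with condition (1) is the classical fact that this data determines, and is determined by, the pair $(T(C),C)$. Both of your directions are sound in outline. Two places deserve to be written out rather than asserted. First, in $(2)\Rightarrow(1)$ the sentence ``no framing obstruction arises'' is carrying the real technical weight: the correct statement is that the disk bundle over a sphere with $k$ open disks removed is trivial, that the regluing ambiguity along the solid tori bounding the cone neighborhoods of the singular points is a choice of framing, and that the total twist is absorbed precisely by matching the normal Euler numbers ($1$ for each line, $4$ for the conic); this is what lets the local ball homeomorphisms assemble. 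Second, in $(1)\Rightarrow(2)$ the invariance of the self-intersection should be phrased via the intersection form on $H_2(T(C);\mathbb{Z})$, which a homeomorphism of the neighborhoods preserves up to a global sign, so that $|4|\neq|1|$ forces $h$ to carry the conic to the conic; you are right that this step is genuinely needed, since the abstract incidence data alone can fail to single out the conic (e.g.\ a conic with two tangent lines meeting each other). With those two points made explicit, the argument is complete and would serve as a proof the paper currently lacks.
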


The multiset of all characteristic triples of intersection points in $C$ is denoted as the weak numerical type. 
\begin{example}
Fig.~\ref{fig7}. shows all $(3,1)$–arrangements up to combinatorial
equivalence. For $(n,1)$–arrangements with $n \le 3$, the weak numerical type
already determines the combinatorics.

Consider specifically the upper-left example in Fig.~\ref{fig7}. This
$(3,1)$–arrangement consists of a conic and three tangent lines. There are two
kinds of intersection points: tangency points (denoted $P_1$) and simple
intersection points between lines (denoted $P_2$). For any $p_1 \in P_1$ we have
$\operatorname{char}(p_1) = (1,1,0)$, and for any $p_2 \in P_2$ we have
$\operatorname{char}(p_2) = (0,0,2)$.
\end{example}

\begin{figure}[ht]
\centering
\scalebox{0.67}{\includegraphics{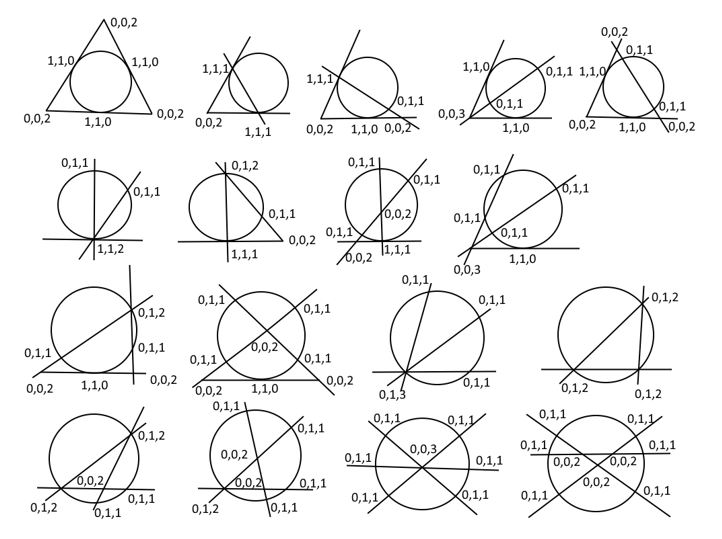}}
\caption{All $(3,1)-$arrangements up to this combinatorial equivalency}\label{fig7}
\end{figure}


\subsection{Algorithmic approach}

We develop an \emph{inductive program} to generate and test all possible $(n,1)$--arrangements:

\begin{enumerate}
    \item Start with the unique $(0,1)$--arrangement, consisting only of a conic.
    \item At each step $j$, generate all $(j,1)$--arrangements by adding a new line in every possible way to each $(j-1,1)$--arrangement.
    \item After each step, remove duplicates up to combinatorial equivalence.
\end{enumerate}

Repeating this procedure for $j \leq n$ yields the complete collection of $(n,1)$--arrangements up to combinatorial equivalence.

\subsection{Methods for identifying Zariski pairs}
To determine whether a pair of arrangements truly forms a Zariski pair, we apply the following methods:
\begin{itemize}
    \item Applying known and new lemmas about minimal Zariski pairs.
    \item Testing for projective equivalence via the action of $PGL(3,\mathbb{C})$.
    \item Computing fundamental groups of the complements. 
\end{itemize}
\noindent\textbf{Key lemmas.}\label{sec:keylemmas}
Let $(\mathcal{B}_1,\mathcal{B}_2)$ be a minimal Zariski pair of $(n,1)$--arrangements and let $L\in B_1$ be a line. We now state key minimality criteria that allow us to eliminate many combinatorial candidates without performing group computations.

\begin{lemma}\label{lemma_1}
Line $L$ must pass through at least one constraint.
\end{lemma}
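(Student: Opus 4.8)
The plan is to argue by contradiction through minimality: I assume that $L$ passes through no constraint and show that deleting $L$, together with its partner $L'$ in $\mathcal{B}_2$, already produces a Zariski pair, contradicting the minimality of $(\mathcal{B}_1,\mathcal{B}_2)$. Since $L$ meets no constraint, every intersection point of $L$ with the remaining components $\mathcal{B}_1\setminus L$ lies on exactly two components (a transversal crossing with another line or with the conic, or a tangency with the conic). Invoking Theorem~\ref{thrm32}, condition (1) of the Zariski pair supplies matched indexings of the lines of $\mathcal{B}_1$ and $\mathcal{B}_2$ and a bijection $b:I(\mathcal{B}_1)\to I(\mathcal{B}_2)$ preserving characteristic triples, line-index sets, and tangencies; I let $L'\in\mathcal{B}_2$ denote the line carrying the same index as $L$. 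Because being a \emph{constraint} --- having at least three components through the point --- is determined by the characteristic triple together with the line-index set, both of which $b$ preserves, the partner $L'$ likewise passes through no constraint.

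Next I would verify condition (1) for the pair $(\mathcal{B}_1\setminus L,\mathcal{B}_2\setminus L')$. Deleting $L$ removes precisely the intersection points that lie on $L$; every surviving point is disjoint from $L$, so neither its characteristic triple nor its line-index set changes. Since $b$ preserves the line-index sets and sends the index of $L$ to that of $L'$, a point $p$ lies on $L$ exactly when $b(p)$ lies on $L'$, so $b$ restricts to a bijection $I(\mathcal{B}_1\setminus L)\to I(\mathcal{B}_2\setminus L')$ still matching all of the data required by Theorem~\ref{thrm32}. Hence the sub-arrangements satisfy the first condition in the definition of a Zariski pair.

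The decisive step is condition (2): I must show that $\mathcal{B}_1\setminus L$ and $\mathcal{B}_2\setminus L'$ are not globally homeomorphic. I would prove the contrapositive, upgrading any homeomorphism $g:\mathbb{CP}^2\to\mathbb{CP}^2$ with $g(\mathcal{B}_1\setminus L)=\mathcal{B}_2\setminus L'$ to one sending $\mathcal{B}_1$ to $\mathcal{B}_2$. The image $g(L)$ meets $\mathcal{B}_2\setminus L'$ transversally in the same combinatorial pattern as $L'$ does, with matching crossing and tangency types guaranteed by the preserved characteristic triples. I would then isotope $g(L)$ to $L'$ through curves transversal to $\mathcal{B}_2\setminus L'$, extend this to an ambient isotopy of $\mathbb{CP}^2$ fixing $\mathcal{B}_2\setminus L'$ setwise, and compose with $g$ to obtain a global homeomorphism carrying $\mathcal{B}_1$ to $\mathcal{B}_2$ --- contradicting the assumption that $(\mathcal{B}_1,\mathcal{B}_2)$ is a Zariski pair. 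Consequently both conditions hold for the proper sub-arrangements, violating minimality, so $L$ must pass through at least one constraint.

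The main obstacle is exactly the isotopy claim in the last paragraph: the rigorous statement that a line meeting the rest of the arrangement only in double points can be ``added back'' topologically. This rests on the connectedness of the space of embedded spheres meeting the fixed sub-arrangement $\mathcal{B}_2\setminus L'$ in the prescribed transversal pattern, together with an isotopy-extension argument; the no-constraint hypothesis is essential here, since a constraint on $L$ would create additional incidences that obstruct any such isotopy. Carefully handling a possible tangency of $L$ to the conic --- where the isotopy must preserve the tangency recorded by $\chi_1$ --- is the most delicate point, and I expect this to be where the real work lies.
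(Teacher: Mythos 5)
Your argument is correct and follows essentially the same route as the paper: both proofs hinge on the fact that a line meeting the rest of the arrangement only in double points can be added back uniquely up to an ambient isotopy preserving the combinatorics, so that minimality applied to the sub-arrangements $\overline{\mathcal{B}_1\setminus L}$, $\overline{\mathcal{B}_2\setminus L'}$ forces $\mathcal{B}_1\simeq\mathcal{B}_2$, a contradiction. You merely run the same dichotomy in contrapositive order (showing the sub-pair would itself be a Zariski pair rather than invoking minimality to conclude the sub-arrangements are homeomorphic), and you are more explicit than the paper about where the isotopy-extension step --- which the paper dispatches with ``we can easily find a continuous deformation'' --- actually requires work.
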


\begin{proof}
Assume $L$ meets no constraint. Let $L'$ be the line in $B_2$ corresponding to $L$ under the fixed combinatorial identification between $B_1$ and $B_2$. 
Set $C_1=\overline{B_1\setminus L}$ and $C_2=\overline{B_2\setminus L'}$. 
By construction, $C_1$ and $C_2$ have the same combinatorics. Since $(B_1,B_2)$ is \emph{minimal}, $C_1$ and $C_2$ are homeomorphic. So we can assume without the loss of generality that $C_1 = C_2$. Now, we can easily find a continuous deformation that takes $L$ to $L'$, leaving the combinatorics constant throughout the deformation. Hence $B_1 \simeq B_2$, a contradiction.
\end{proof}

\begin{lemma}\label{lemma2}
If a non-tangent line $L$ has exactly one constraint $p$, then the characteristic of $p$ in 
$\overline{B_1 \setminus L}$ occurs at least at one other point in $\overline{B_1 \setminus L}$.
\end{lemma}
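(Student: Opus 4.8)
The plan is to argue by contradiction, reusing the deletion-and-deform strategy of Lemma~\ref{lemma_1} but now \emph{anchored} at the constraint $p$. Suppose the characteristic of $p$ in $C_1 := \overline{B_1\setminus L}$ occurs at no other point of $C_1$. Let $L'\in B_2$ be the line matched to $L$ under the combinatorial identification, let $p'$ be the constraint of $L'$, and set $C_2 := \overline{B_2\setminus L'}$. Deleting corresponding lines preserves the combinatorial identification, so $C_1$ and $C_2$ are combinatorially equivalent; by Theorem~\ref{thrm32} they then satisfy Definition~\ref{def:zariski_pairs}(1), and since $(B_1,B_2)$ is minimal, $(C_1,C_2)$ is not a Zariski pair, so Definition~\ref{def:zariski_pairs}(2) fails and there is a homeomorphism $\phi:\mathbb{CP}^2\to\mathbb{CP}^2$ with $\phi(C_1)=C_2$.

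The first key step is to pin down the image of $p$. A homeomorphism of arrangements preserves the characteristic triple of each intersection point, and $\operatorname{char}_{C_2}(p')=\operatorname{char}_{C_1}(p)$ because $p$ and $p'$ are corresponding constraints. By the standing assumption $p$ is the \emph{unique} point of $C_1$ carrying this triple, so $p'$ is the unique such point of $C_2$, forcing $\phi(p)=p'$. This is precisely where the uniqueness hypothesis is spent: without it $\phi$ could carry $p$ to a different point bearing the same characteristic, and the reattachment below would collapse.

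Next I would transport everything by $\phi$ and assume $C_1=C_2=:C$ with $p=p'$, so that $B_1=C\cup L$ and $B_2=C\cup L'$ with both $L$ and $L'$ non-tangent lines in the pencil through the single point $p$. I then want to connect $L$ to $L'$ inside this pencil without changing the combinatorics. Parametrising the pencil by $\mathbb{CP}^1$, the lines through $p$ that alter the combinatorial type — those tangent to the conic (whether at $p$ or elsewhere) and those passing through a second intersection point of $C$ — form a finite subset $D\subset\mathbb{CP}^1$. Since $\mathbb{CP}^1\setminus D$ is path-connected and contains both $L$ and $L'$, a path joining them sweeps out a family $C\cup L_t$ of constant combinatorial type; here the hypotheses that $p$ is the \emph{only} constraint of $L$ and that $L$ is non-tangent guarantee that rotating about $p$ keeps $\operatorname{char}(p)$ fixed and all remaining intersections simple. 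The induced ambient isotopy then yields a homeomorphism carrying $B_1$ to $B_2$, contradicting that $(B_1,B_2)$ is a Zariski pair.

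The routine verifications are that $D$ is finite and that the characteristic of $p$ is unchanged along the rotation, both of which hold because only $L$ moves while the rest of $C$ stays fixed. The genuine difficulty is the image-of-$p$ step: everything hinges on the uniqueness assumption forcing $\phi(p)=p'$, since this is what converts the global homeomorphism $\phi$ into a reattachment problem confined to a single pencil — exactly the setting in which the connectedness of $\mathbb{CP}^1\setminus D$ can be exploited.
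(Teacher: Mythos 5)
Your argument is correct and follows essentially the same route as the paper's proof: delete $L$ and $L'$, invoke minimality to identify $\overline{B_1\setminus L}$ with $\overline{B_2\setminus L'}$, and use the uniqueness of $\operatorname{char}(p)$ together with non-tangency to conclude that the deleted line can be reattached in only one way up to a combinatorics-preserving deformation, forcing $B_1\simeq B_2$. Your write-up is simply more explicit than the paper's one-line justification of that reattachment step, adding the observation that any homeomorphism must send $p$ to $p'$ and the pencil-through-$p$/finite-bad-set connectedness argument.
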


\begin{proof}
Assume for contradiction that the characteristic of $p$ is unique in 
$\overline{B_1 \setminus L}$.  
Let $L'$ be the line in $B_2$ corresponding to $L$, so that 
$\overline{B_1 \setminus L} \simeq \overline{B_2 \setminus L'}$.  
Since the characteristic of $p$ in $\overline{B_1 \setminus L}$ is unique, and $L$ is not tangent to the conic, there is a unique way, up to a continuous deformation that preserves the combinatorics, to add a line to $\overline{B_1 \setminus L}$ to recover 
the combinatorics of $B_1$.  
Thus
\[
B_1 = \overline{B_1 \setminus L} \cup L \;\;\simeq\;\; 
\overline{B_2 \setminus L'} \cup L' = B_2,
\]
which is a contradiction.
\end{proof}
\begin{remark}
  If $L$ is tangent to the conic, then the characteristic of its unique constraint point does not determine the direction of tangency. Consequently, the process of adding back the removed line is not unique (even up to isotopy), so the above argument does not apply.  
\end{remark}

\begin{lemma}
If line $L$ has exactly two constraints $p, q$, then the characteristics of $p$ and $q$ in 
$\overline{B_1 \setminus L}$ occur at least at one other pair of points in $\overline{B_1 \setminus L}$.
\end{lemma}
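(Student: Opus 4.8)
\section*{Proof proposal}

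The plan is to follow the template of Lemmas~\ref{lemma_1} and~\ref{lemma2}: assume the conclusion fails, use minimality to identify the two deleted-line sub-arrangements, and then show that the line can be put back in an essentially unique way, forcing $B_1\simeq B_2$. First I would assume for contradiction that $\{p,q\}$ is the \emph{only} pair of points of $\overline{B_1\setminus L}$ realizing the two characteristics $\operatorname{char}(p)$ and $\operatorname{char}(q)$. Let $L'\in B_2$ be the line matched to $L$ by the combinatorial identification of $B_1$ and $B_2$, and write $C_1=\overline{B_1\setminus L}$ and $C_2=\overline{B_2\setminus L'}$. As in the earlier lemmas, deleting matched lines leaves $C_1$ and $C_2$ with the same combinatorics, so by Theorem~\ref{thrm32} they satisfy the tubular-neighborhood condition; minimality of $(B_1,B_2)$ then forces a homeomorphism $g:\mathbb{CP}^2\to\mathbb{CP}^2$ with $g(C_1)=C_2$, and I may assume $C_1=C_2=:C$.

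Next I would pin the line down using its two constraints. In $B_2$ the matched line $L'$ again meets exactly two constraints $r,s$, and the identification sends $\{p,q\}$ to $\{r,s\}$ with equal characteristics. Since $g$ preserves the characteristic data — tangency, incidence with the conic, and the number of local branches at a point are all preserved by a homeomorphism of $\mathbb{CP}^2$ — the points $r,s$ are points of $C$ whose characteristics are $\operatorname{char}(p)$ and $\operatorname{char}(q)$. The contradiction hypothesis then gives $\{r,s\}=\{p,q\}$. The decisive new ingredient, compared with the single-constraint case of Lemma~\ref{lemma2}, is that \emph{two} points determine a line: restoring both constraints forces the added line to pass through $p$ and $q$, so its placement — and in particular whether it is tangent to the conic — is completely determined, rather than leaving the tangency direction free as the Remark after Lemma~\ref{lemma2} describes. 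Reconstructing $B_1$ and $B_2$ from $C$ therefore adds one and the same line through $\{p,q\}$ up to a combinatorics-preserving isotopy, the only remaining freedom being the simple (non-constraint) crossings, which are absorbed by an ambient isotopy rel $C$ exactly as before. Hence $B_1\simeq B_2$, contradicting that $(B_1,B_2)$ is a Zariski pair.

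The main obstacle I expect is making the final isotopy step rigorous. The homeomorphism $g$ supplied by minimality need not respect the linear structure, so $g(L)$ is only a topological curve through $g(p)$ and $g(q)$, and I must argue that any two such curves meeting the fixed arrangement $C$ in the same combinatorial pattern and passing through the same pair of points are ambient isotopic rel $C$. This is the same configuration-space connectedness used implicitly in Lemmas~\ref{lemma_1} and~\ref{lemma2}; the point specific to this statement is to verify that fixing \emph{both} constraint points genuinely eliminates the tangency ambiguity that obstructed the argument when only one constraint was present, so that the connectedness claim — and hence the conclusion — applies even in the case where $L$ is tangent to the conic.
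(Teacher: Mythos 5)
Your proof follows essentially the same route as the paper, whose entire proof is the statement that the argument is identical to that of Lemma~\ref{lemma2}: assume for contradiction that the pair of characteristics occurs only at $\{p,q\}$, use minimality to identify $\overline{B_1\setminus L}$ with $\overline{B_2\setminus L'}$, and conclude that the line is restored in an essentially unique way, forcing $B_1\simeq B_2$. Your additional observation---that the two constraint points determine the restored line and therefore eliminate the tangency ambiguity flagged in the remark after Lemma~\ref{lemma2}---is a worthwhile clarification, since the paper's one-line proof-by-reference silently relies on exactly this point to justify dropping the non-tangency hypothesis that Lemma~\ref{lemma2} requires.
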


\begin{proof}
    The argument is the same as in Lemma \ref{lemma2}.
\end{proof}


\medskip
\noindent\textbf{Projective equivalence.}\label{sec:pgl}
We use the $3$--transitivity of $PGL(3,\mathbb{C})$ to test whether two configurations are projectively equivalent.  

If such a transformation exists, the arrangements are projectively (hence topologically) equivalent.

\medskip

\noindent\textbf{Fundamental groups.}\label{sec:groups}
We compute fundamental groups of complements to further distinguish arrangements.  
If two arrangements share the same combinatorics but have non-isomorphic fundamental groups, they form a Zariski pair.  
Even when the groups are isomorphic, we may still have a Zariski pair.

Isomorphism tests rely on algebraic algorithms, developed by Gebhardt, which allow us to compare group presentations rigorously.

\subsection{Achievements to date}

\begin{itemize}
    \item We proved that no $(n,1)$--arrangement Zariski pairs exist for $n \leq 4$.
    \item Our program classifies all $(n,1)$--arrangements for $n \leq 5$.
    \item New lemmas were discovered to eliminate potential pairs.
    \item Algorithms were developed to analyze fundamental groups and detect Zariski pairs.
\end{itemize}

\subsection{Revisiting conic-line arrangement example}

Let us revisit the example of a $(4,1)$--arrangement, see  \hyperlink{41example}{Fig.~1}. Denote this arrangement as $C$. Its combinatorial type is given in the following table.
\FloatBarrier
\begin{table}[h!]
\centering
\begin{tabular}{|c|l|}
\hline
\textbf{Point} & \textbf{Characteristic} \\ \hline
$P(1,2)$ & characteristic is (1, 1, 1), lines are 1, 2, tangency is 1 \\ \hline
$P(4)$ & characteristic is (0, 1, 1), lines are 4\\ \hline
$P(2,4)$ & characteristic is (0, 0, 2), lines are 2,4 \\ \hline
$P(2,3)$ &characteristic is (0, 1, 2), lines are 2, 3 \\ \hline
$P(3,4)$ & characteristic is (0, 1, 2), lines are 3, 4\\ \hline
$P(1,3)$ &characteristic is (0, 0, 2), lines are 1, 3 \\ \hline
$P(1,4)$ & characteristic is (0, 0, 2), lines are 1, 4\\ \hline
\end{tabular}
\caption{Points in Fig.~\ref{41example}. and their characteristics}
\end{table}
\FloatBarrier
Observe that $L_4$ passes through exactly one constraint, $P(3,4)$. If we remove $L_4$, then the characteristic of $P(3,4)$ in $\overline{C \setminus L_4} $ is unique. By Lemma~\ref{lemma2}, there are no Zariski pairs of this combinatorial type.

\subsection{Summary}

We have:
\begin{itemize}
    \item Defined Zariski pairs and their role in the study of algebraic curves.
    \item Transformed the problem of finding all Zariski pair candidates from a topological to a combinatorial one.
    \item Presented the inductive algorithm used by our program to generate all possible $(n,1)$--arrangements for certain natural $n$.
    \item Discussed the methods used to determine whether a candidate is truly a Zariski pair, including projective equivalence and fundamental groups.
    \item Reported current results, notably the claim that there are no Zariski pairs among $(4,1)$--arrangements.
    \item Revisited a specific $(4,1)$--arrangement and demonstrated how our combinatorial criterion and lemmas can be applied to rule out the existence of Zariski pairs in this case.

\end{itemize}

\subsection{Next steps}

We outline two main directions for continuing this research.
\paragraph{More lines}
As the number of lines increases, the complexity of the arrangement space grows rapidly. New intersection patterns arise that require additional treatment beyond the techniques used in low-degree cases. A key challenge is the combinatorial explosion in the number of candidate configurations. We therefore plan to optimize our algorithm and refine the process of adding lines in order to efficiently handle larger values of~$n$.

\paragraph{Multiple conics}
The presence of more than one conic significantly increases the number of geometric relationships and intersection types that must be considered. The current definition of combinatorial equivalence must be extended to incorporate interactions between several conics and between conics and lines. We plan to improve the algorithmic efficiency of our methods to manage the increased number of cases and comparisons required in this broader setting.

\section*{Acknowledgement}
This work was carried out in collaboration with Uriel Sinichkin (Tel Aviv University) and Volker Gebhardt (Western Sydney University). 
We thank them for their guidance, mathematical insights, and many helpful discussions throughout the development of the project.

%

%
%
\bibliographystyle{spmpsci}
\bibliography{references}

\end{document}